\documentclass{amsart}

\usepackage{amsmath,xypic,leftidx}
\usepackage{xspace}
\usepackage[psamsfonts]{amssymb}
\usepackage[latin1]{inputenc}
\usepackage{graphicx,color}

\usepackage{hyperref}

\theoremstyle{remark}

\theoremstyle{plain}

\newtheorem*{theorem}{\bf Theorem}

\newtheorem{lemma}{\bf Lemma}
\newtheorem{corollary}{\bf Corollary}

\def\C{{\mathbb C}}

\def\Z{{\mathbb Z}}
\def\H{{\mathbb H}}

\def\P{{\widehat {\mathbb C}}}
\def\T{{\mathcal T}}

\def\x{{\bf x}}
\def\y{{\bf y}}
\def\v{{\bf v}}
\def\w{{\bf w}}
\def\G{{\bf G}}
\def\X{{\frak X}}

\def\and{{\quad\text{and}\quad}}
\def\with{{\quad\text{with}\quad}}

\def\eqdef{{:=}}

\title{Perturbations of flexible Latt\`es maps}
\begin{author}[X.~Buff]{Xavier Buff}
\email{xavier.buff$@$math.univ-toulouse.fr}\thanks{The research of the first author was supported in part by the grant ANR-08-JCJC-0002 and the IUF}
\end{author}
\begin{author}[T.~Gauthier]{Thomas Gauthier}
\email{thomas.gauthier$@$math.univ-toulouse.fr}
\address{ %
  Universit\'e Paul Sabatier\\
  Institut de Math\'ematiques de Toulouse \\
  118, route de Narbonne \\
  31062 Toulouse Cedex \\
  France }
\end{author}

\begin{document}

\begin{abstract}
We prove that any Latt\`es map can be approximated by  
strictly postcritically finite rational maps which are not Latt\`es maps. 
\end{abstract}

\maketitle

\section*{Introduction}

A rational map of degree $D\geq 2$ is {\em strictly postcritically finite} if the orbit of each critical point intersects a repelling cycle. Among those, flexible Latts maps (the definition is given below) play a special role. The following result answers a question raised in \cite{be}. 

\begin{theorem}
Every flexible Latt\`es map can be approximated by strictly postcritically finite rational maps which are not Latt\`es maps. 
\end{theorem}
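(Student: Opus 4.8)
The plan is to work near $[f]$ in the moduli space $\mathcal M_D$ of degree $D=n^2$ rational maps, where $f$ is our flexible Latt\`es map, conjugate to an affine map $L(z)=nz+\beta$ ($n\in\Z$, $|n|\geq2$, $2\beta\in\Lambda$) on an elliptic curve $E=\C/\Lambda$ through the degree-$2$ Weierstrass projection $\pi\colon E\to\P$. First I would record what makes the statement nontrivial. Since $L$ maps the $2$-torsion $E[2]$ into itself, the postcritical set $P_f$ equals the four cone points $\pi(E[2])$; and since $J(f)=\P$, every periodic point of $f$ is repelling, so each critical orbit of $f$ already terminates on a repelling cycle. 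Thus $f$ is itself strictly postcritically finite, and the whole content of the theorem is the word \emph{not}. The rigidity we will exploit against it is that every Latt\`es map has at most $4$ postcritical points (the cone points of its parabolic orbifold). So it suffices to produce strictly postcritically finite rational maps, arbitrarily close to $f$, whose postcritical set has at least $5$ points.

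Next I would fix a holomorphic family $(f_\lambda)_{\lambda\in\Lambda}$ parametrizing a neighborhood of $[f]$ in moduli, with $\Lambda$ a ball of dimension $m=2D-2=\dim\mathcal M_D$, $f_{\lambda_0}=f$, and holomorphically varying simple critical points $c_1(\lambda),\dots,c_m(\lambda)$ (the $2D-2$ critical points of $f$ are simple, by a direct count through $\pi$, and simplicity persists). The engine is activity: each $c_i$ is active at $\lambda_0$, i.e.\ the family $\{\lambda\mapsto f_\lambda^N(c_i(\lambda))\}_{N\geq0}$ is not normal near $\lambda_0$ — were it passive, $c_i$ would be persistently preperiodic near $\lambda_0$ (as $c_i(\lambda_0)\in J(f)=\P$), which fails in $\mathcal M_D$. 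Applying Montel's theorem to three repelling periodic points of $f$ that move holomorphically with $\lambda$, one then gets parameters arbitrarily close to $\lambda_0$ for which $c_i$'s orbit falls onto a prescribed repelling cycle of $f$ (for instance one of period $5$).

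The heart of the proof is to realize all $m$ critical relations \emph{simultaneously} near $\lambda_0$. I would build a nested sequence of analytic subsets $\Lambda=X_0\supset X_1\supset\cdots\supset X_m$, each accumulating at $\lambda_0$, with $\mathrm{codim}_\Lambda X_k=k$, on which $f_\lambda^{N_i}(c_i(\lambda))=a_i(\lambda)$ for $1\leq i\leq k$, where $a_i$ is a holomorphically moving repelling periodic point and $a_1$ sits on a cycle of period $5$. To pass from $X_k$ to $X_{k+1}$: check that $c_{k+1}$ remains active along $X_k$ in every neighborhood of $\lambda_0$, apply the Montel argument inside $X_k$ to land $c_{k+1}$'s orbit on a chosen repelling cycle, and take $X_{k+1}$ a component of $\{\,f_\lambda^{N_{k+1}}(c_{k+1}(\lambda))=a_{k+1}(\lambda)\,\}\cap X_k$ still accumulating at $\lambda_0$ — activity forcing the codimension to drop. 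As $\dim X_m=0$, parameters of $X_m$ arbitrarily close to $\lambda_0$ furnish strictly postcritically finite rational maps; each of their postcritical sets contains a $5$-cycle, hence none is a Latt\`es map. This proves the theorem.

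The step I expect to be the real obstacle is exactly this induction: that $c_{k+1}$ stays active on the progressively constrained $X_k$ near $\lambda_0$ (the relations already imposed could a priori force it passive), and that the components $X_{k+1}$ do not drift away from $\lambda_0$. I would expect to control this through bifurcation currents: activity of $c_i$ at $\lambda_0$ means $\lambda_0\in\mathrm{supp}\,T_i$ for the associated activity current $T_i$, and what is really needed is $\lambda_0\in\mathrm{supp}(T_1\wedge\cdots\wedge T_m)$ — general position of these currents at the flexible Latt\`es parameter — after which the density of strictly postcritically finite parameters in $\mathrm{supp}(T_1\wedge\cdots\wedge T_m)$ finishes the job. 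The delicate point is that the $m$ obvious critical-orbit hypersurfaces through $[f]$ are \emph{not} mutually transverse there: their excess intersection is precisely the flexible Latt\`es locus. So the general-position statement must genuinely use the elliptic geometry of $f$ — presumably via a transversality computation along that locus.
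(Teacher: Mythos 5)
Your plan correctly isolates what must be proved (a nearby strictly postcritically finite map with more than $4$ postcritical points cannot be Latt\`es), and your preliminary observations (each critical point is active at $[f]$; Montel gives parameters where one chosen critical orbit lands on a prescribed repelling cycle) are sound. But the proof has a genuine gap exactly where you flag it, and flagging it is not the same as closing it: the inductive claim that $c_{k+1}$ remains active \emph{along} $X_k$ arbitrarily close to $\lambda_0$, and that the successive components $X_{k+1}$ keep accumulating at $\lambda_0$, is unproved, and it is precisely the hard point. The wedge-product/general-position formulation you propose as a fallback is the Buff--Epstein transversality mechanism, and the whole reason this theorem was an open question is that that mechanism provably breaks down at flexible Latt\`es parameters: the $2D-2$ critical-orbit conditions are degenerate along the Latt\`es curve, and one cannot conclude $\lambda_0\in{\rm supp}(T_1\wedge\cdots\wedge T_{2D-2})$ without first knowing something like the theorem itself. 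So the proposal reduces the statement to an unproved "transversality computation along the Latt\`es locus" rather than proving it.

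The paper closes this gap by abandoning the induction over all $2D-2$ critical points in the full moduli space and instead working in an explicit two-complex-parameter family $(\gamma,t)\mapsto f_{\gamma,t}=(1+t)f_\gamma$, where $\gamma$ is the modulus of the torus. In this family the critical values $0$ and $\infty$ stay preperiodic to repelling cycles for free, so only two conditions must be imposed: the critical values $(1+t)v(\gamma)$ and $(1+t)w(\gamma)$ must each hit a nearby preperiodic point $\x_k$, resp.\ $\y_k$ (images under $\Theta$ of rational points at distance $O(a^{-k})$ from the half-periods, persisting in a hyperbolic set). Each condition is solved for $t=s_k(\gamma)$, resp.\ $t=t_k(\gamma)$, by an asymptotic computation of the relative speeds of critical values versus marked points; the key quantitative inputs are a quadratic-differential identity $\lambda/v=-\mu/w$ for the second-order jets of $\Theta$ at the half-periods and the formula $(\dot\x_\infty-\dot\v)/v=(\dot\y_\infty-\dot\w)/w\neq0$. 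These combine to give $s_k/t_k\to-\sigma^2/\tau^2$, and choosing $\gamma_0$ so that this limit equals $1$ lets one satisfy \emph{both} conditions at a single parameter $\gamma_k\to\gamma_0$. This is the "elliptic geometry" input you anticipated, but it is used to build the perturbation directly rather than to verify a general-position statement for currents; nothing in your proposal supplies a substitute for it.
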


Given $D\geq 2$, denote by ${\rm Rat}_D$ the space of rational maps of degree $D$. A rational map $f\in {\rm Rat}_D$ has a {\em Julia set} $J_f$ which may be defined as the closure of the set of repelling cycles of $f$. The Julia set is the support of a measure $\mu_f$ which may be defined as the unique invariant measure of (maximal) entropy $\log D$.

The {\em bifurcation locus} in ${\rm Rat}_D$ is the closure of the set of discontinuity of the map $f\mapsto J_f$. Laura DeMarco \cite{dm} proved that the bifurcation locus is the support of a positive closed $(1,1)$-current $T_{\rm bif}\eqdef dd^c {\frak L}$, where ${\frak L}$  is the plurisubharmonic function which sends a rational map $f$ to its Lyapunov exponent with respect to $\mu_f$. 

M\"obius transformations act by conjugacy on ${\rm Rat}_D$ and the quotient space is an orbifold known as the {\em moduli space} $\mathcal{M}_D$ of rational maps of degree $D$. Giovanni Bassanelli and Fran\c{c}ois Berteloot \cite{bb} introduced a measure $\mu_{\rm bif}$ on this moduli space, which may be obtained by pushing forward $T_{\rm bif}^{\wedge (2D-2)}$. 

In \cite{be}, the first author and Adam Epstein, using a transversality result in ${\rm Rat}_D$, proved that the conjugacy class of a strictly postcritically finite map which is not a flexible Latt\`es map is in the support of $\mu_{\rm bif}$. Since the support of $\mu_{\rm bif}$ is closed and since the conjugacy class of a strictly postcritically finite rational maps which is not a Latt\`es maps is in this support, our result has the following consequence. 

\begin{corollary}
The classes of flexible Latt\`es maps in $\mathcal{M}_D$ lie in the support of the bifurcation measure $\mu_{\rm bif}$.
\end{corollary}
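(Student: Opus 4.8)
The plan is to read the Corollary off from the Theorem together with the transversality result of~\cite{be} recalled in the introduction, using only two elementary facts: the support of a (positive) measure is closed, and the canonical projection $\pi\colon{\rm Rat}_D\to\mathcal{M}_D$ is continuous.

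Concretely, I would fix a flexible Latt\`es map $f\in{\rm Rat}_D$ (if there is no flexible Latt\`es map of degree $D$ the statement is vacuous) and set $[f]\eqdef\pi(f)\in\mathcal{M}_D$. By the Theorem there is a sequence $(f_n)_{n\geq 1}$ in ${\rm Rat}_D$ with $f_n\to f$ such that each $f_n$ is strictly postcritically finite and is not a Latt\`es map; in particular no $f_n$ is a flexible Latt\`es map, so the hypothesis of~\cite{be} quoted above is met by every $f_n$, and therefore $[f_n]=\pi(f_n)\in{\rm supp}(\mu_{\rm bif})$ for all $n$. Since $\pi$ is continuous we have $[f_n]\to[f]$ in $\mathcal{M}_D$, and since ${\rm supp}(\mu_{\rm bif})$ is closed it contains this limit, i.e. $[f]\in{\rm supp}(\mu_{\rm bif})$. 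As $f$ was an arbitrary flexible Latt\`es map, this is exactly the Corollary.

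I do not expect a genuine obstacle here: all of the real work is carried by the Theorem and by the cited transversality statement of~\cite{be}. The only points worth spelling out are that the perturbations furnished by the Theorem avoid \emph{every} Latt\`es map (not merely the flexible ones), so that the non-flexible-Latt\`es hypothesis of~\cite{be} applies verbatim, and that passing from an approximation in ${\rm Rat}_D$ to one in $\mathcal{M}_D$ is precisely what continuity of $\pi$ provides, while closedness of the support is what lets the limiting class $[f]$ inherit membership in ${\rm supp}(\mu_{\rm bif})$.
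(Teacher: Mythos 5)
Your argument is correct and is essentially the paper's own: the Corollary is deduced exactly as you describe, from the Theorem plus the result of \cite{be} that the class of a strictly postcritically finite map which is not a (flexible) Latt\`es map lies in ${\rm supp}(\mu_{\rm bif})$, together with closedness of the support and continuity of the projection to $\mathcal{M}_D$. No further comment is needed.
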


In \cite{g}, the second author proved that the support of the bifurcation measure has maximal Hausdorff dimension, i.e. has dimension $2(2D-2)$, and that it is homogeneous (the support of $\mu_{\rm bif}$ has maximal dimension in any neighborhood of its points). Corollary $1$ thus yields the following result.

\begin{corollary}
Let $f\in{\rm Rat}_D$ be a flexible Latt\`es map and let $V\subset\mathcal{M}_D$ be an open neighborhood of the conjugacy class of $f$. Then, $\dim_H({\rm supp}(\mu_{\rm bif})\cap V)=2(2D-2)$.
\end{corollary}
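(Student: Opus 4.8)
The plan is to obtain this statement as an immediate consequence of Corollary~1 together with the homogeneity result of \cite{g}, so that essentially no new argument is needed. First I would dispose of the upper bound: since $\mathcal{M}_D$ is an orbifold of complex dimension $2D-2$, any of its subsets---in particular ${\rm supp}(\mu_{\rm bif})\cap V$---has real Hausdorff dimension at most $2(2D-2)$. Hence it remains only to establish the reverse inequality $\dim_H\big({\rm supp}(\mu_{\rm bif})\cap V\big)\geq 2(2D-2)$.

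For the lower bound I would first apply Corollary~1, which tells us that the conjugacy class $[f]\in\mathcal{M}_D$ of the flexible Latt\`es map $f$ lies in ${\rm supp}(\mu_{\rm bif})$. Then I would invoke the main theorem of \cite{g}: the support of $\mu_{\rm bif}$ has maximal Hausdorff dimension $2(2D-2)$ and is homogeneous, in the sense that for every $x\in{\rm supp}(\mu_{\rm bif})$ and every open neighborhood $W$ of $x$ one has $\dim_H\big({\rm supp}(\mu_{\rm bif})\cap W\big)=2(2D-2)$. Since $V$ is by hypothesis an open neighborhood of $[f]$, taking $x=[f]$ and $W=V$ yields exactly $\dim_H\big({\rm supp}(\mu_{\rm bif})\cap V\big)=2(2D-2)$, which is the claim.

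The only point demanding attention is purely a matter of bookkeeping: one must check that the homogeneity statement quoted from \cite{g} is the genuinely pointwise one---maximal dimension in \emph{every} neighborhood of \emph{every} point of the support---rather than merely an almost-everywhere assertion. The introduction already records it in this strong form, so I expect no real obstacle; in effect, the whole content of this corollary has been pushed into Corollary~1 and into the results of \cite{g}.
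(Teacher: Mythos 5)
Your proposal is correct and matches the paper's own (very brief) argument: apply Corollary~1 to place $[f]$ in ${\rm supp}(\mu_{\rm bif})$, then invoke the homogeneity of the support from \cite{g} to get maximal Hausdorff dimension in every neighborhood of $[f]$. The separate treatment of the trivial upper bound is fine but implicit in the paper.
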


Bassanelli and Berteloot \cite{bb2} proved that every point in the support of $\mu_{\rm bif}$ can be approximated by rational maps having $2D-2$ distinct neutral cycles. Their argument can be adapted to prove that the support of $\mu_{\rm bif}$ can be approximated by hyperbolic maps having $2D-2$ distinct attracting cycles (see \cite{courseBC} Section 5.2). By Corollary $1$, we have the following result.

\begin{corollary}
Any flexible Latt\`es map $f\in{\rm Rat}_D$ can be approximated by hyperbolic rational maps having $2D-2$ distinct attracting cycles.
\end{corollary}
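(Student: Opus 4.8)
The plan is to obtain the statement as a formal consequence of Corollary~1 together with the hyperbolic version of the Bassanelli--Berteloot density theorem recalled just above; no new dynamical input should be required. First I would fix a flexible Latt\`es map $f\in{\rm Rat}_D$ and invoke Corollary~1 to place its conjugacy class $[f]$ in ${\rm supp}(\mu_{\rm bif})\subset\mathcal{M}_D$. Next I would apply the adaptation of \cite{bb2} explained in \cite{courseBC} Section~5.2, according to which every point of ${\rm supp}(\mu_{\rm bif})$ is a limit, in $\mathcal{M}_D$, of conjugacy classes of hyperbolic rational maps carrying $2D-2$ distinct attracting cycles; this produces a sequence $(g_n)$ of such maps with $[g_n]\to[f]$ in $\mathcal{M}_D$.

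The remaining, and essentially only non-formal, step is to lift this convergence from $\mathcal{M}_D$ to ${\rm Rat}_D$: I want M\"obius transformations $M_n$ such that $M_n\circ g_n\circ M_n^{-1}\to f$ in ${\rm Rat}_D$. Here I would use that the conjugacy action of the group ${\rm PSL}_2(\C)$ of M\"obius transformations on ${\rm Rat}_D$ is proper, so that $\mathcal{M}_D$ is Hausdorff near $[f]$ and the quotient projection $\pi\colon{\rm Rat}_D\to\mathcal{M}_D$ is open. Choosing a relatively compact open neighbourhood $U$ of $f$ in ${\rm Rat}_D$, the set $\pi(U)$ is an open neighbourhood of $[f]$, so for $n$ large some conjugate $h_n\in U$ of $g_n$ exists; by compactness of $\overline U$ a subsequence $h_{n_k}$ converges to some $h\in\overline U$, necessarily with $[h]=[f]$, hence $h=M\circ f\circ M^{-1}$ for a M\"obius transformation $M$. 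Then $M^{-1}\circ h_{n_k}\circ M\to f$ in ${\rm Rat}_D$, and each of these maps, being conjugate to $g_{n_k}$, is hyperbolic with $2D-2$ distinct attracting cycles. Since the statement only asserts approximability, exhibiting this subsequence converging to $f$ is exactly what is needed.

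I expect the lifting to be the main (indeed the only) obstacle, and even it is routine once the properness of the ${\rm PSL}_2(\C)$-action is invoked. The one subtlety worth keeping in mind is that a flexible Latt\`es map may have a nontrivial finite group of automorphisms, so $\pi^{-1}([f])$ is the whole orbit ${\rm PSL}_2(\C)\cdot f$ rather than a single point; this causes no difficulty, because approximating any M\"obius conjugate of $f$ is equivalent, for the purpose of this corollary, to approximating $f$ itself. All the genuine content lies in Corollary~1 (hence ultimately in the Theorem) and in the cited hyperbolic refinement of the Bassanelli--Berteloot result.
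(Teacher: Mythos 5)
Your proposal is correct and follows the paper's own route exactly: Corollary 1 places the class of $f$ in ${\rm supp}(\mu_{\rm bif})$, and the hyperbolic adaptation of Bassanelli--Berteloot cited from \cite{courseBC} Section 5.2 supplies the approximating maps. The only difference is that you spell out the lifting of the convergence from $\mathcal{M}_D$ back to ${\rm Rat}_D$ via properness of the M\"obius conjugation action --- a step the paper leaves implicit --- and your treatment of that step is sound.
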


The approach for solving this problem was suggested by John Milnor. We wish to express our gratitude. We also wish to thank the Banff International Research Station for hosting the workshop ``Frontiers in Complex Dynamics (Celebrating John Milnor's 80th birthday)'' during which we developed our proof. 

\section{Flexible Latt\`es maps}

Following Milnor \cite{milat}, we define a {\em flexible Latt\`es map} of degree $D\geq 2$ to be a rational map $f:\P\to \P$ for which there is a commutative diagram :
\[\diagram
\C/\Lambda \rto^{L} \dto_\Theta & \C/\Lambda  \dto^\Theta\\
\P\rto_f &\P
\enddiagram\]
where
\begin{itemize}
\item $\Lambda\subset \C$ is a lattice of rank $2$;
\item $\T\eqdef \C/\Lambda$ is the quotient torus;
\item $L:\T\ni \tau\mapsto a\tau+b\in  \T$   with $a\in \Z$, $a^2=D$, and $2b\in \Lambda/\bigl(2\Lambda+(a-1)\Lambda\bigr)$;
 \item $\Theta:\T\to \P$ is a $2$-to-$1$ holomorphic map ramifying at points in $\Lambda/2$. 
\end{itemize}

In the rest of the article, the lattice $\Lambda$ will be of the form $\Lambda=\Z\oplus \gamma\Z$, where $\gamma$ is a complex number which is allowed to vary in the upper half-plane $\H$ of complex numbers with positive imaginary part. We shall use the following normalization for $\Theta_\gamma:\T\to \P$ : 
\[\Theta_\gamma(0)=0,\quad \Theta_\gamma\left(\frac{\gamma+1}{2}\right)=\infty,\quad \Theta_\gamma\left(\frac{1}2\right)=v(\gamma)\quad \text{and}\quad \Theta_\gamma\left(\frac{\gamma}2\right)=w(\gamma)\] where  $v:\H\to  \C-\{0\}$ and $w:\H\to \C-\{0\}$ are holomorphic functions (this may be achieved for example by requiring that $v\equiv 1$). Then, for all $\gamma\in \H$, $v(\gamma)\neq w(\gamma)$. 

~

The derivative of the  torus endomorphism $L_\gamma:\T_\gamma \to \T_\gamma$ will be a fixed integer $a$ which does not depend on $\gamma$. When $a=\pm 2$, the critical value set of the Latt\`es map $f_\gamma:\P\to \P$ is $\bigl\{\infty,v(\gamma),w(\gamma)\bigr\}$ and if $|a|\geq 3$, the critical value set is $\bigl\{0,\infty,v(\gamma),w(\gamma)\bigr\}$. In all cases, the postcritical set of $f_\gamma$ is $\bigl\{0,\infty,v(\gamma),w(\gamma)\bigr\}$. More precisely, we will have to study three cases : 
\begin{itemize}
\item Case 1 : $a$ is even and $L_\gamma(\tau) = a\tau$. In that case, all the critical values are mapped to $0$ which is a fixed point of $f$. 

\item Case 2 : $a$ is odd and $L_\gamma(\tau) = a\tau$. In that case, all the critical values are fixed with multiplier $a^2$. 

\item Case 3 : $a$ is odd and $L_\gamma(\tau) = a\tau + (\gamma+1)/2$. In that case, the Latt\`es map permutes the critical value at $0$ with that at infinity. It also permutes the critical value at $v(\gamma)$ with the critical value at $w(\gamma)$. The multiplier of each cycle is $a^4$. 
\end{itemize}
From now on, we assume that we are in one of those three cases, and we consider the analytic family of Latt\`es maps 
\[\H\ni \gamma\mapsto f_\gamma\in {\rm Rat}_D,\]
where $ {\rm Rat}_D$ is the space of rational maps of degree $D$. 
We shall use the notation $f$, $v$, $w$, \ldots  in place of $f_\gamma$, $v(\gamma)$, $w(\gamma)$, \ldots when $\gamma$ is assumed to be fixed and there is no confusion.

\section{Estimates for $\Theta$}

\begin{lemma}\label{lemma:c1c2} 
As $\tau\to 0$, we have the following expansion
\[\Theta(1/2+\tau) = v+\lambda \tau^2+o(\tau^2)\quad \text{and}\quad \Theta(\gamma/2+\tau) = w+\mu \tau^2+o(\tau^2)\]
with
\[\frac{\lambda}{v} =-\frac{\mu}{w}\neq 0.\]
\end{lemma}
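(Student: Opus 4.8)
The plan is to realize $\Theta$ as a Möbius image of the Weierstrass $\wp$-function of the lattice $\Lambda=\Z\oplus\gamma\Z$, and then read off $\lambda$ and $\mu$ from the local behaviour of $\wp$ at the half-periods. Since $\Theta\colon\T\to\P$ has degree $2$, Riemann--Hurwitz forces it to be branched over exactly four points, necessarily the images of the four ramification points $0,\tfrac12,\tfrac\gamma2,\tfrac{\gamma+1}2$ in $\Lambda/2$; hence the deck involution of $\Theta$ is an automorphism of $\T$ with four fixed points, i.e. $\tau\mapsto-\tau$. Therefore $\Theta$ is even and factors as $\Theta=M\circ\wp$ for some Möbius transformation $M$. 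Writing $e_1=\wp(\tfrac12)$, $e_2=\wp(\tfrac\gamma2)$, $e_3=\wp(\tfrac{\gamma+1}2)$ — the three distinct roots of $4X^3-g_2X-g_3=4(X-e_1)(X-e_2)(X-e_3)$, with $\wp$ having a double pole at $0$ — the normalization of $\Theta_\gamma$ forces $M(\infty)=0$, $M(e_3)=\infty$, $M(e_1)=v$ and $M(e_2)=w$. Thus $M(X)=k/(X-e_3)$ for some nonzero constant $k$, with $v=k/(e_1-e_3)$ and $w=k/(e_2-e_3)$.

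Next I would extract the order-two Taylor coefficient of $\wp$ at the half-periods. Plugging $\wp(\tfrac12+\tau)=e_1+c\,\tau^2+o(\tau^2)$ into $(\wp')^2=4(\wp-e_1)(\wp-e_2)(\wp-e_3)$ and comparing the $\tau^2$-terms gives $4c^2=4c(e_1-e_2)(e_1-e_3)$, hence $c=(e_1-e_2)(e_1-e_3)$ (and symmetrically at $\tfrac\gamma2$, with the roles of $e_1$ and $e_2$ interchanged). Composing with $M$ and using $M'(e_1)=-k/(e_1-e_3)^2$ yields
\[\Theta\left(\tfrac12+\tau\right)=v-\frac{k(e_1-e_2)}{e_1-e_3}\,\tau^2+o(\tau^2),\qquad \Theta\left(\tfrac\gamma2+\tau\right)=w-\frac{k(e_2-e_1)}{e_2-e_3}\,\tau^2+o(\tau^2).\]
Dividing by $v$ and by $w$ respectively gives $\lambda/v=-(e_1-e_2)$ and $\mu/w=-(e_2-e_1)=e_1-e_2$, so $\lambda/v=-\mu/w$; and this common value is nonzero precisely because the roots $e_1,e_2,e_3$ are pairwise distinct (equivalently, the elliptic curve is smooth).

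I do not anticipate a serious obstacle: once $\Theta$ is identified with $M\circ\wp$, the rest is a short computation. The step closest to being delicate is the structural part — justifying that the four branch points of $\Theta$ are exactly the images of $\Lambda/2$, so that the deck involution is $-\mathrm{id}$ and $\Theta$ descends through $\wp$, and then checking that the prescribed normalization pins $M$ down in the stated form. In particular one should note that $v$ and $w$ being finite and nonzero corresponds to $e_1,e_2\neq e_3$ together with $k\neq 0$, which is what makes $\lambda$ and $\mu$ well defined and the displayed ratios meaningful; the bookkeeping of signs in the composition with $M$ also deserves a moment's attention.
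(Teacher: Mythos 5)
Your proof is correct, but it follows a genuinely different route from the paper. You uniformize: by Riemann--Hurwitz the degree-$2$ map $\Theta$ has exactly four simple ramification points, its deck involution is $-\mathrm{id}$, hence $\Theta=M\circ\wp$ with $M(X)=k/(X-e_3)$ pinned down by the normalization; then $\lambda$ and $\mu$ come from the classical expansion $\wp(\tfrac12+\tau)=e_1+(e_1-e_2)(e_1-e_3)\tau^2+o(\tau^2)$ (obtained from the differential equation of $\wp$) composed with $M$, and the identity $\lambda/v=-(e_1-e_2)=-\mu/w$ drops out, nonzero because the $e_i$ are pairwise distinct. The paper instead avoids Weierstrass theory entirely: it pulls back the meromorphic quadratic differential $q=\dfrac{dz^2}{z(z-v)(z-w)}$, observes that $\Theta^*q$ is holomorphic on the torus because $\Theta$ is totally ramified over the (simple) polar set of $q$, hence $\Theta^*q=\kappa\,d\tau^2$, and computes $\kappa$ from the local expansions at $1/2$ and at $\gamma/2$, getting $\kappa=\frac{4\lambda}{v(v-w)}=\frac{4\mu}{w(w-v)}$, which is exactly the stated relation. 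The quadratic-differential argument is shorter and coordinate-free; yours is more computational but yields the explicit values $\lambda/v=e_2-e_1=-\mu/w$ in terms of the standard invariants, and every step (the identification of the deck involution, the factorization through $\wp$, the determination of $M$, and the sign bookkeeping) checks out.
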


\begin{proof}
Since $\Theta$ has simple critical points at $1/2$ and $\gamma/2$, we have an expansion as in the statement with $\lambda\neq 0$ and $\mu\neq 0$. Our work consists in proving the relation between $\lambda/v$ and $\mu/w$. 
Let $q$ be the meromorphic quadratic differential on $\P$ defined by :
\[q \eqdef  \frac{dz^2}{z(z-v)(z-w)}.\]
Since $q$ has simple poles and since $\Theta$ is totally ramified above the polar set of $q$, the quadratic differential $\Theta^*q$ is holomorphic on $\T$, whence 
\[\Theta^* q=\kappa\cdot d\tau^2\quad \text{with}\quad \kappa\in \C-\{0\}.\] 
Let $\tau$ tend to $0$ and set $z=\Theta(1/2+\tau)$. Then, 
$z- v\sim \lambda \tau^2 $ and $dz^2 \sim 4\lambda ^2 \tau^2 d\tau^2$. 
It follows that 
\[ \frac{dz^2}{z(z-v)(z-w)} \sim \frac{4\lambda ^2\tau^2d\tau^2}{v\cdot (v-w)\cdot \lambda \tau^2} = 
 \frac{4\lambda d\tau^2}{v(v-w)}.\]
This show that 
\[\kappa = \frac{4\lambda }{v(v-w)}.\]
Using the expansion of $\Theta$ at $\gamma/2$ yields similarly
\[\kappa = \frac{4\mu }{w(w-v)}.\]
The result now follows easily. 
\end{proof}

\section{Hyperbolic sets}

From now on, we let $\alpha$, $\alpha'$, $\beta$ and $\beta'$ be rational numbers. We assume that the denominators of $\alpha$ and $\beta$ are coprime with $a$ and $2$.  We let $\sigma$ and $\tau$ be the points in the torus $\T$ defined by
\[\sigma \eqdef \alpha+\alpha'\gamma\quad \text{and}\quad \tau \eqdef \beta+\beta'\gamma.\]
Remark that, as $\sigma$ and $\tau$ have rational coordinates, they are (pre)periodic under multiplication by $a$ and that $\sigma$, $\tau$, $a\sigma$ and $a\tau$ are (pre)periodic under multiplication by $a^2$. For each integer $k\geq  1$, let $\tau_k$ and $\sigma_k$ be the points in $\T$ defined by  
\[\sigma_k \eqdef  \frac{1}{2}+\frac{\sigma}{a^{k}}\quad \text{and}\quad \tau_k\eqdef  \frac{\gamma}{2}+\frac{\tau}{a^{k}}\]
and let $x_k$ and $y_k$ be the points in $\P$ defined by 
\[x_k\eqdef \Theta(\sigma_k)\quad \text {and}\quad y_k\eqdef \Theta(\tau_k).\]
We shall denote by $X$, $Y$, and $Z$ the following subsets of $\P$ :
\[X\eqdef \bigcup_{k\geq 1}\bigcup_{n\geq 0} f^{\circ n}(x_k),\quad 
Y\eqdef \bigcup_{k\geq 1}\bigcup_{n\geq 0} f^{\circ n}(y_k)\quad \text{and}\quad
Z \eqdef X\cup Y\cup \{0,\infty,v,w\}.\]
Note that $Z$ is an invariant set. 

\begin{lemma}\label{lemma:holomot}
There is a neighborhood $\G$ of $f$ in ${\rm Rat}_D$ and a dynamical holomorphic motion $\phi:\G\times Z\to \P$ such that for all $g\in \G$ and all $z\in Z$, we have 
\[g\bigl( \phi(g,z)\bigr)= \phi\bigl(g, g(z)\bigr).\]
\end{lemma}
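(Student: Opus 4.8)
The plan is to build the holomorphic motion of $Z$ by first motioning the ``seed'' points and then extending equivariantly along the dynamics. Concretely, I would proceed as follows.

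\textbf{Step 1: Motion of the finite postcritical set.} The four points $0,\infty,v,w$ are the postcritical set of $f$, and since $a$ is an integer with $a^2=D$ the Latt\`es map $f$ is strictly postcritically finite — each of $0,\infty,v,w$ lies in a repelling cycle (this is exactly the content of Cases 1--3: in Case 1 all critical values land on the fixed point $0$ whose multiplier is $a^2$; in Cases 2 and 3 the critical values themselves lie on repelling cycles of multiplier $a^2$ or $a^4$). A repelling periodic point depends holomorphically on the map in a neighborhood of $f$ by the implicit function theorem, so there is a neighborhood $\G_0$ of $f$ and holomorphic functions $g\mapsto 0(g),\infty(g),v(g),w(g)$ tracking these four points, giving a dynamical holomorphic motion of $\{0,\infty,v,w\}$ (injectivity for $g$ near $f$ is automatic since the four points are distinct at $f$).

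\textbf{Step 2: Motion of the seed points $x_k,y_k$.} Each $x_k=\Theta(\sigma_k)$ with $\sigma_k=\tfrac12+\sigma/a^k$ and $\sigma=\alpha+\alpha'\gamma$; since the denominator of $\alpha$ is coprime with $a$ and $2$, the point $\sigma_k$ has order (with respect to the relevant power of $a$) controlled so that $a^N\sigma_k$ lands, after finitely many iterations, onto the torus point mapping to one of $0,\infty,v,w$; equivalently $f^{\circ n}(x_k)$ is eventually periodic and its forward orbit eventually meets the repelling cycle carrying the postcritical set. Hence each $x_k$ is a \emph{strictly preperiodic} point of $f$ — a point whose forward orbit lands on a repelling cycle. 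Such a preperiodic point also moves holomorphically: it is an isolated solution of $f^{\circ (n+p)}=f^{\circ n}$ (for suitable $n,p$) whose multiplier is $\neq 1$ because the cycle it lands on is repelling, so again the implicit function theorem produces a holomorphic section $g\mapsto x_k(g)$ defined near $f$, and similarly for $y_k(g)$. The only subtlety is that the iteration count $n$ and the neighborhood may depend on $k$; so a priori we get a motion of $\{x_k,y_k\}$ over a neighborhood $\G_k$ that may shrink with $k$.

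\textbf{Step 3: Extending equivariantly and the main obstacle.} Once the seeds move holomorphically, define $\phi(g, f^{\circ n}(x_k))\eqdef g^{\circ n}\bigl(x_k(g)\bigr)$, and similarly on the $Y$-orbits and on $\{0,\infty,v,w\}$; this is forced by the equivariance relation $g(\phi(g,z))=\phi(g,g(z))$ and is well-defined provided distinct points of $Z$ stay distinct under this assignment. Continuity/holomorphy in $g$ of each branch is clear; the content is (i) well-definedness — two representations $f^{\circ n}(x_k)=f^{\circ m}(x_\ell)$ of the same point of $Z$ must give the same value $g^{\circ n}(x_k(g))=g^{\circ m}(x_\ell(g))$ — and (ii) injectivity in $z$, i.e. the extended map $\phi(g,\cdot)$ remains injective on all of $Z$ for $g$ in a \emph{single} neighborhood $\G$ independent of $k$ and $n$. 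This uniformity is the main obstacle. I expect it to be resolved by a normality/$\lambda$-lemma argument: the set $Z$ is contained in the Julia set $J_f$ and is forward invariant, so one can first motion a \emph{finite} forward-invariant subset using the hyperbolicity of the target cycle, then invoke the $\lambda$-lemma (holomorphic motions of a set extend uniquely and continuously to its closure, staying injective) together with the expansion estimate of Lemma~\ref{lemma:c1c2} near the ramification values — which controls how the pullbacks $\Theta(\tfrac12+\tau)=v+\lambda\tau^2+o(\tau^2)$ behave — to get branches of $f^{-1}$ along the relevant backward orbits that are uniformly defined on a fixed neighborhood. In other words: realize $Z$ as the (forward orbit of the) grand orbit of finitely many preperiodic points, motion those finitely many points on one neighborhood $\G$, push forward by $g^{\circ n}$ to cover $X\cup Y$, and use that the $g^{\circ n}$ are the same finitely-iterated maps plus uniform hyperbolicity at the landing cycle to conclude injectivity on the closure. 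Choosing $\G\subset\G_0$ small enough that the finitely many ``generating'' preperiodic points and the four postcritical points remain pairwise distinct and remain attached to repelling cycles, the resulting $\phi$ is the desired dynamical holomorphic motion.
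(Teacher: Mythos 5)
Your Steps 1 and 2 are fine as far as they go: each $x_k,y_k$ is indeed strictly preperiodic onto a repelling cycle, and the four postcritical points are repelling (pre)periodic, so all individual points of $Z$ move holomorphically by the implicit function theorem. You also correctly identify the real difficulty — getting a \emph{single} neighborhood $\G$ that works for the infinitely many seeds $x_k,y_k$ simultaneously. But your Step 3 does not actually resolve it, and one of its ingredients is wrong.

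The concrete gap is that you never address whether $Z$ contains critical points of $f$, and this is precisely the fact that makes the uniform argument go through. In the paper this is deduced from the hypothesis that the denominators of $\alpha$ and $\beta$ are coprime with $a$ and $2$: that arithmetic condition forces the repelling cycles that capture the orbits of the $x_k$ and $y_k$ to be disjoint from the postcritical set $\{0,\infty,v,w\}$, so the forward orbits in $X$ and $Y$ never pass through a critical point. Together with the (easy) fact that $Z$ is a closed, forward-invariant set, this makes $Z$ a compact hyperbolic invariant set (Shishikura--Tan / Ma\~n\'e), and holomorphic motions of hyperbolic sets over a fixed neighborhood of the map are standard. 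That is the clean mechanism; your sketch never gets there. In particular, your plan to ``realize $Z$ as the forward orbit of finitely many preperiodic points'' is not correct — the seeds $\{x_k\}_{k\ge1}$ and $\{y_k\}_{k\ge1}$ are genuinely infinite families accumulating at $v$ and $w$, not a finite set, so you cannot reduce to finitely many branches and push forward. Likewise, invoking the expansion of $\Theta$ from Lemma~\ref{lemma:c1c2} near the ramification values has no bearing on the uniformity of the motion; that lemma is used later for the transversality computation, not here. The $\lambda$-lemma can extend a motion from a dense subset of $Z$ to its closure, but you still need the motion on that dense subset to be defined on a $k$-independent neighborhood, and for that you need hyperbolicity of the whole set $Z$ — which in turn requires the ``no critical points in $Z$'' observation you omit.
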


\begin{proof}
\noindent{\bf Case 1 :} $a$ is even. In that case, $f^{\circ k}$ maps $x_k$ to $\Theta(\sigma)$ and $y_k$ to $\Theta(\tau)$. Since $\sigma$ and $\tau$ are (pre)periodic under the action of $L$,  the points $x_k$ and $y_k$ are (pre)periodic.

~

\noindent{\bf Case 2 :} $a$ is odd and $b=0$. In that case, $f^{\circ k}$ maps $x_k$ to $\Theta(\frac{1}{2}+\sigma)$ and $y_k$ to $\Theta(\frac{\gamma}{2}+\tau)$. As $L(\frac{1}{2}+\sigma)=\frac{1}{2}+a\sigma$ and $L(\frac{\gamma}{2}+\tau)=\frac{\gamma}{2}+a\tau$ and as $\sigma$ and $\tau$ are (pre)periodic under multiplication by $a$, the points $\frac{1}{2}+\sigma$ and $\frac{\gamma}{2}+\tau$ are (pre)periodic under the action of $L$, which means that the points $x_k$ and $y_k$ are (pre)periodic.

~

\noindent{\bf Case 3 :}  $a$ is odd and $b=(\gamma+1)/2$. In that case, we use that
\[L^{\circ 2}\Big(\frac{1}{2}+z\Big)=\frac{1}{2}+a^2z\quad \text{and}\quad L^{\circ 2}\Big(\frac{\gamma}{2}+z\Big)=\frac{\gamma}{2}+a^2z\]
for any $z\in\mathcal{T}$. If $k$ is even, $f^{\circ k}$ maps $x_k$ to $\Theta(\frac{1}{2}+\sigma)$ and $y_k$ to $\Theta(\frac{\gamma}{2}+\tau)$. Since $\sigma$ and $\tau$ are (pre)periodic under multiplication by $a^2$, $\frac{1}{2}+\sigma$ and $\frac{\gamma}{2}+\tau$ are (pre)periodic under iteration of $L^{\circ 2}$ and the points $x_k$ and $y_k$ are (pre)periodic.
\par Now, if $k$ is odd, $f^{\circ k}$ maps $x_k$ to $\Theta(\frac{\gamma}{2}+\sigma)$ and $y_k$ to $\Theta(\frac{1}{2}+\tau)$. Since $L(\frac{\gamma}{2}+\sigma)=\frac{1}{2}+a\sigma$ and $L(\frac{1}{2}+\tau)=\frac{\gamma}{2}+a\tau$ and since $a\sigma$ and $a\tau$ are (pre)periodic under multiplication by $a^2$, $\frac{1}{2}+a\sigma$ and $\frac{\gamma}{2}+a\tau$ are (pre)periodic under the action of $L^{\circ 2}$, which implies that the points $x_k$ and $y_k$ are (pre)periodic.

~

In all cases, the set $Z$ is closed and each point in $Z$ is (pre)periodic to a repelling cycle of $f$. Since  the denominators of $\alpha$ and $\beta$ are coprime with $a$ and $2$, the cycles capturing points in $X$ and $Y$ are disjoint from the postcritical set of $f$. In particular, $Z$ does not contain critical points. As a consequence,  $Z$ is a hyperbolic invariant compact set (see \cite{shishitan} Theorem 1.2 page 266) and the result follows (see \cite{shishi} $(1.2)$ section 2 for a sketch of the proof, or \cite{gthese} section 2.1 for a proof). 
\end{proof}

For all $g\in \G$, the map $\phi_g:z\mapsto \phi(g,z)$ is a homeomorphism from  $Z$ to $Z_g\eqdef \phi_g(Z)$ which conjugates $f:Z\to Z$ to $g:Z_g\to Z_g$. Every point $z\in Z_g$ is (pre)periodic under iteration of $g$ to a repelling cycle of $g$. For $k\geq 1$, we shall use the notation $\x_k(g)\eqdef \phi_g(x_k)$ and $\y_k(g)\eqdef \phi_g(y_k)$. 
As $k\to +\infty$, the sequence $x_k$ tends to $x_\infty\eqdef v$. We set $\x_\infty(g)\eqdef \phi_g(v)$. Similarly, 
as $k\to +\infty$, the sequence $y_k$ tends to $y_\infty\eqdef w$. We set $\y_\infty(g)\eqdef \phi_g(w)$.

\section{A family of perturbations}

Let us now restrict to the family of rational maps $t\mapsto f_t$ which is defined for $t\in \C$ by 
\[f_t\eqdef (1+t)\cdot f.\]
If $t\neq -1$, the rational map $f_t$ has degree $D$ and $f_0= f$. 
The critical values of the rational map $f_t$ are contained in the set $\{0,\infty,(1+t)\cdot v,(1+t)\cdot w\}$ and for $t$ sufficiently close to $0$, the point $0$ is either a repelling fixed point or a repelling periodic point of period $2$, and $\infty$ is either mapped to a repelling fixed point at $0$, or a repelling fixed point, or a repelling periodic point of period $2$. 
We will now study the relative motion of the critical values  $\v(f_t)\eqdef (1+t)\cdot v$ and $\w(f_t)\eqdef (1+t)\cdot w$ with respect to the points $\x_\infty(f_t)$ and $\y_\infty(f_t)$. We will use the notations
\[\dot \v\eqdef \frac{d \v(f_t)}{dt}\Bigl|_{t=0}\in T_{v}\P,\quad \dot \w\eqdef \frac{d \w(f_t)}{dt}\Bigl|_{t=0}\in T_{w}\P,\]
\[ \dot \x_\infty\eqdef \frac{d \x_\infty(f_t)}{dt}\Bigl|_{t=0}\in T_{v}\P,\and \dot \y_\infty\eqdef \frac{d \y_\infty(f_t)}{dt}\Bigl|_{t=0}\in T_{w}\P.\]
We shall also use the notation 
\[\dot f \eqdef \frac{d f_t}{dt}\Bigl|_{t=0}\]
which is a section of the pullback bundle $f^\star T\P$ : for all $z\in \P$, $\dot f(z)\in T_{f(z)}\P$. 

\begin{lemma}\label{lemma:v1v2} 
We have
\[\frac{\dot \x_\infty -\dot \v}{v} = \frac{\dot \y_\infty-\dot \w}{w}\neq 0.\]
\end{lemma}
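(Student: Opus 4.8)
The plan is to compute $\dot\x_\infty$ and $\dot\y_\infty$ explicitly, the point being that although these derivatives a priori involve the opaque holomorphic motion of Lemma~\ref{lemma:holomot}, the moved points $\x_\infty(f_t)=\phi_{f_t}(v)$ and $\y_\infty(f_t)=\phi_{f_t}(w)$ are characterized purely \emph{dynamically} by the relations $g\bigl(\x_\infty(g)\bigr)=\phi_g\bigl(f(v)\bigr)$ and $g\bigl(\y_\infty(g)\bigr)=\phi_g\bigl(f(w)\bigr)$. First I would record two elementary facts about the perturbation $f_t=(1+t)f$: in the affine coordinate $z$ one has $\dot f(z)=f(z)\in T_{f(z)}\P$ (the perturbation is generated by the vector field $z\mapsto z$), and $\dot\v=v$, $\dot\w=w$. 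I would also stress that $\x_\infty(f_t)$ must not be confused with the critical value $(1+t)v$: it is instead the repelling periodic point of $f_t$ continuing the corresponding periodic point of $f$ through $v$, which perturbs holomorphically precisely because $Z$ contains no critical point of $f$, so $f'(v)\neq0$ and $f'(w)\neq0$.

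I would then go through the three cases of Section~1. In Case~1 ($a$ even), $f(v)=0$ is a repelling fixed point with $f_t(0)=0$ for all $t$, so $\phi_{f_t}(0)=0$ and $f_t\bigl(\x_\infty(f_t)\bigr)=0$; differentiating at $t=0$ gives $f'(v)\dot\x_\infty=0$, hence $\dot\x_\infty=0$ and $(\dot\x_\infty-\dot\v)/v=-1$, and symmetrically $(\dot\y_\infty-\dot\w)/w=-1$. In Case~2 ($a$ odd, $b=0$), $v$ is fixed with multiplier $f'(v)=a^2$, so $\x_\infty(f_t)$ is the repelling fixed point of $f_t$ near $v$; differentiating $f_t\bigl(\x_\infty(f_t)\bigr)=\x_\infty(f_t)$ gives $\dot\x_\infty=v/(1-a^2)$, hence $(\dot\x_\infty-\dot\v)/v=a^2/(1-a^2)$, and symmetrically for $y$. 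In both cases the two ratios coincide and are nonzero, and Lemma~\ref{lemma:c1c2} is not needed.

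The substantive case is Case~3 ($a$ odd, $b=(\gamma+1)/2$), in which $f$ exchanges $v$ and $w$, so $\{\x_\infty(f_t),\y_\infty(f_t)\}$ is a $2$-cycle of $f_t$; differentiating $f_t\bigl(\x_\infty(f_t)\bigr)=\y_\infty(f_t)$ and $f_t\bigl(\y_\infty(f_t)\bigr)=\x_\infty(f_t)$ yields the linear system $\dot\y_\infty=f'(v)\dot\x_\infty+w$, $\dot\x_\infty=f'(w)\dot\y_\infty+v$. The local multipliers are obtained from the expansions in Lemma~\ref{lemma:c1c2}: checking that $L(1/2+s)\equiv\gamma/2+as$ and $L(\gamma/2+s)\equiv1/2+as$ modulo $\Lambda$, one reads off $f'(v)=\mu a^2/\lambda$ and $f'(w)=\lambda a^2/\mu$ (so $f'(v)f'(w)=a^4$, in accordance with Section~1). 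Solving the system gives $\dot\x_\infty-\dot\v=\bigl(\lambda a^2w/\mu+a^4v\bigr)/(1-a^4)$, and the identity $\lambda w=-\mu v$ from Lemma~\ref{lemma:c1c2} reduces this to $(\dot\x_\infty-\dot\v)/v=-a^2/(a^2+1)$; the symmetric computation (using $\mu v=-\lambda w$) gives the same value for $(\dot\y_\infty-\dot\w)/w$, which is nonzero since $a^2=D\geq2$.

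The main obstacle is really the first, conceptual step: recognizing that $\dot\x_\infty$ and $\dot\y_\infty$ are computable at all, by characterizing $\x_\infty(f_t)$ and $\y_\infty(f_t)$ as honest periodic points (or preimages of fixed points) of $f_t$ and carefully keeping them apart from the critical values, which move trivially. After that the work is bookkeeping — in Case~3, pinning down $f'(v)$ and $f'(w)$ from the behavior of $\Theta$ near the two half-periods, and observing that Lemma~\ref{lemma:c1c2} is exactly the input that forces the two ratios to agree (and to equal the common nonzero value $-a^2/(a^2+1)$).
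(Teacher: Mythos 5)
Your proof is correct and follows essentially the same route as the paper's: the same case analysis, the same implicit differentiation of the dynamical relations characterizing $\x_\infty(f_t)$ and $\y_\infty(f_t)$ as continued (pre)periodic points rather than critical values, and the same use of Lemma~\ref{lemma:c1c2} together with $\lambda f'(v)=\mu a^2$, $\mu f'(w)=\lambda a^2$ to close Case~3. Your values of the common ratio ($-1$, $a^2/(1-a^2)$, $-a^2/(1+a^2)$) agree with the paper's constants $c$.
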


\begin{proof}
By construction, 
\[\dot \v = v\frac{d}{dz}\and \dot \w = w\frac{d}{dz}.\] 
In addition, for all $z\in \P$, 
\[\dot f(z)=f(z)\frac{d}{dz} .\]

\noindent{\bf Case 1 :} $a$ is even. In that case, $f$ maps $x_\infty=v$ to $0$ which is a fixed point. As $t$ varies, $f_t$ still maps $v$ to $0$ and still fixes $0$. Thus, $\x_\infty\equiv v$ and $\dot \x_\infty = 0$,
whence 
\[\dot \x_\infty -\dot \v = -\dot \v.\]
Similarly 
\[\dot \y_\infty - \dot \w = -\dot \w.\]

\noindent{\bf Case 2 :} $a$ is odd and $b=0$. In that case,  $f$ fixes $x_\infty=v$. So, $\x_\infty(f_t)$ is the fixed point of $f_t$ which is close to $v$. Derivating $f_t\bigl(\x_\infty(f_t)\bigr)=\x_\infty(f_t)$ with respect to $t$, and evaluating at $t=0$, we get :
\[\dot f(v) + D_{v}f(\dot \x_\infty) = \dot \x_\infty.\]
Since $\dot f(v)=v\frac{d}{dz}=\dot \v$ and $D_{v}f$ is multiplication by $a^2$, we get
\[\dot \x_\infty = \frac{\dot \v}{1-a^2}\]
whence 
\[\dot \x_\infty -\dot \v = \left(\frac{1}{1-a^2}-1\right) \dot \v =\frac{a^2}{1-a^2}\dot \v.\]
Similarly
\[\dot \y_\infty - \dot \w = \frac{a^2}{1-a^2} \dot \w.\]

\noindent{\bf Case 3 :} $a$ is odd and $b=(\gamma+1)/2$.  In that case $f$ permutes $x_\infty=v$ and $y_\infty=w$. Derivating $f_t\bigl(\x_\infty(f_t)\bigr)=\y_\infty(f_t)$ and $f_t\bigl(\y_\infty(f_t)\bigr)=\x_\infty(f_t)$ with respect to $t$,  and evaluating at $t=0$, we get :
\begin{equation}\label{eq:case3}
\dot f(v) + D_{v}f(\dot \x_\infty) = \dot \y_\infty\and \dot f(w) + D_{w}f(\dot \y_\infty) = \dot \x_\infty.
\end{equation}
On the one hand $\dot f(v) = w\frac{d}{dz}=\dot \w$ and $\dot f(w)=v\frac{d}{dz}=\dot \v$. 
On the other hand, 
\[L\left(\frac{1}{2}+\tau\right) = \frac{\gamma}{2}+a\tau\and L\left(\frac{\gamma}{2}+\tau\right) = \frac{1}{2}+a\tau.\]
Thus, 
\[f\bigl(v+\lambda \tau^2+o(\tau^2)\bigr)= w+\mu a^2\tau^2+o(\tau^2)\and
f\bigl(w+\mu \tau^2+o(\tau^2)\bigr)= v+\lambda a^2\tau^2+o(\tau^2)\]
which implies
\[\lambda f'(v) = \mu a^2\and \mu f'(w) = \lambda a^2.\]
By Lemma \ref{lemma:c1c2}, it follows that 
\[D_{v}f(\dot \x_\infty) = \frac{\mu a^2}{\lambda }\dot \x_\infty = - \frac{a^2 w}{v}\dot \x_\infty.\]
Similarly, 
\[D_{w}f(\dot \y_\infty) =- \frac{a^2 v}{w}\dot \x_\infty.\]
So, equation (\ref{eq:case3}) yields
\[\dot \w -  \frac{a^2 w}{v} \dot \x_\infty= \dot \y_\infty\quad \text{and}\quad 
\dot \v - \frac{a^2 v}{w} \dot \y_\infty= \dot \x_\infty\]
which boils down to
\[\dot \x_\infty = \frac{1-a^2}{1-a^4}\dot \v\quad \text{and}\quad \dot \y_\infty =\frac{1-a^2}{1-a^4}\dot \w.\]
As a consequence
\[\dot \x_\infty -\dot \v=\left( \frac{1-a^2}{1-a^4}-1\right) \dot \v= -\frac{a^2}{1+a^2}\dot \v.\]
Similarly
\[\dot \y_\infty -\dot \w =-\frac{a^2}{1+a^2}\dot \w.\]

In all cases, there is a constant $c \neq 0$ ($c=-1$ in the first case, $c=a^2/(1-a^2)$ in the second case and $c= -a^2/(1+a^2)$ in the third case) such that 
\[\dot \x_\infty -\dot \v = c \dot \v = c v\frac{d}{dz}\quad \text{and}\quad
 \dot \y_\infty -\dot \w = c \dot \w = c w\frac{d}{dz}.\]
 This shows that 
\[\frac{\dot \x_\infty -\dot \v}{v} = \frac{\dot \y_\infty-\dot \w}{w}=c \frac{d}{dz}\neq 0.\qedhere\]
\end{proof}

\section{Proof of the Theorem}

Let $\gamma_0\eqdef x_0+iy_0 \in \H$ be a point with rational real part $x_0$ and rational imaginary part $y_0$ which are coprime with $a$ and $2$. 
Set $\alpha\eqdef -x_0$, $\alpha' \eqdef 1$, $\beta\eqdef y_0$ and $\beta'\eqdef 0$. We will now allow $\gamma$ to vary in $\H$. So, $\sigma$, $\tau$, $v$, $w$, $\lambda $, $\mu $, $\sigma_k$, $\tau_k$, $x_k$, $y_k$, $x_\infty$, $y_\infty$ are functions of $\gamma$. 
For example, 
\[\sigma:\H\ni \gamma\mapsto \alpha+\alpha'\gamma\in \H\and \tau:\H\ni \gamma\mapsto  \beta+\beta'\gamma\in \H.\]
Note that the M\"obius transformation $\sigma/\tau$ sends $\gamma_0$ to $i$. 

We still consider the family of flexible Latt\`es maps $\H\ni \gamma\mapsto f_\gamma\in {\rm Rat}_D$ and we introduce the family of perturbed maps
\[\H\times (\C-\{-1\}) \ni (\gamma,t) \longmapsto f_{\gamma,t}\eqdef (1+t)\cdot f_\gamma \in {\rm Rat}_D.\]
As in the previous section, we will consider the critical value functions 
\[\v:f_{\gamma,t}\mapsto (1+t)\cdot v(\gamma)\and
\w:f_{\gamma,t}\mapsto (1+t)\cdot w(\gamma).\]
The notation $\dot \v$, $\dot \w$, $\dot \x_\infty$, $\dot \y_\infty$ refers to the derivative with respect to $t$ evaluated at $t=0$. In particular, those are functions of $\gamma\in \H$.

\begin{lemma}
There are a neighborhood $\Gamma$ of $\gamma_0$ in $\H$, an integer $k_0$ and for $k\geq k_0$, holomorphic functions $s_k:\Gamma\to \C$ and $t_k:\Gamma\to \C$ such that : 
\begin{itemize}
\item the sequences $(s_k)$ and $(t_k)$ converge uniformly on $\Gamma$ to $0$,
\item $\x_k(f_{\gamma,s_k(\gamma)})=\v(f_{\gamma,s_k(\gamma)}) $, $\y_k(f_{\gamma,t_k(\gamma)})=\w(f_{\gamma,t_k(\gamma)})$ and 
\item the sequence $(s_k/t_k)$ converges uniformly on $\Gamma$ to $-\sigma^2/\tau^2$.
\end{itemize}
\end{lemma}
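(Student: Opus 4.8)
The plan is to solve, for each large $k$ and with $\gamma$ treated as an auxiliary parameter, the equations $\x_k(f_{\gamma,t})=\v(f_{\gamma,t})$ and $\y_k(f_{\gamma,t})=\w(f_{\gamma,t})$ for $t$ near $0$ by the holomorphic implicit function theorem, and then to read the asymptotics of the solutions off Lemmas \ref{lemma:c1c2} and \ref{lemma:v1v2}. Concretely, since $v(\gamma_0)$ and $w(\gamma_0)$ lie in $\C-\{0\}$, I would work in the affine chart $z$ of $\P$ near these two points: for $(\gamma,t)$ close to $(\gamma_0,0)$ the map $f_{\gamma,t}$ stays in the neighborhood $\G$ of Lemma \ref{lemma:holomot} (applied to $f=f_{\gamma_0}$, $Z=Z(\gamma_0)$), so one may set $\x_k(f_{\gamma,t})\eqdef\phi(f_{\gamma,t},x_k(\gamma_0))$ and $\y_k(f_{\gamma,t})\eqdef\phi(f_{\gamma,t},y_k(\gamma_0))$, and define the holomorphic functions
\[F_k(\gamma,t)\eqdef\x_k(f_{\gamma,t})-\v(f_{\gamma,t}),\qquad G_k(\gamma,t)\eqdef\y_k(f_{\gamma,t})-\w(f_{\gamma,t})\]
on a fixed neighborhood of $(\gamma_0,0)$ in $\H\times\C$. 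Finding $s_k$ and $t_k$ then amounts to producing small roots $t=s_k(\gamma)$ of $F_k(\gamma,\cdot)$ and $t=t_k(\gamma)$ of $G_k(\gamma,\cdot)$, holomorphic in $\gamma$.

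The first input is the value at $t=0$. Because $\phi$ is a dynamical motion and $\gamma\mapsto x_k(\gamma)=\Theta_\gamma(\sigma_k(\gamma))$ is a holomorphic family of points preperiodic to the continuation of the relevant repelling cycle of $f_{\gamma_0}$ and equal to $x_k(\gamma_0)$ at $\gamma_0$, one has $\x_k(f_{\gamma,0})=x_k(\gamma)$, and likewise $\y_k(f_{\gamma,0})=y_k(\gamma)$. Applying the expansions of Lemma \ref{lemma:c1c2} at the parameter $\gamma$ (with local variable $\sigma(\gamma)/a^k$, resp. $\tau(\gamma)/a^k$, which tends to $0$) gives, uniformly on a neighborhood of $\gamma_0$,
\[a^{2k}F_k(\gamma,0)=a^{2k}\bigl(x_k(\gamma)-v(\gamma)\bigr)\longrightarrow\lambda(\gamma)\,\sigma(\gamma)^2,\qquad a^{2k}G_k(\gamma,0)\longrightarrow\mu(\gamma)\,\tau(\gamma)^2,\]
so that $F_k(\cdot,0),G_k(\cdot,0)=O(a^{-2k})$ uniformly. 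The second input is the $t$-derivative at $t=0$. As $k\to\infty$ one has $x_k(\gamma_0)\to v(\gamma_0)$ and $y_k(\gamma_0)\to w(\gamma_0)$ inside the hyperbolic set $Z(\gamma_0)$, so by joint continuity of the holomorphic motion $\x_k(f_{\gamma,t})\to\x_\infty(f_{\gamma,t})$ and $\y_k(f_{\gamma,t})\to\y_\infty(f_{\gamma,t})$ uniformly near $(\gamma_0,0)$; Cauchy's estimates then give $\partial_tF_k(\gamma,0)\to\dot\x_\infty(\gamma)-\dot\v(\gamma)$ and $\partial_tG_k(\gamma,0)\to\dot\y_\infty(\gamma)-\dot\w(\gamma)$, which by Lemma \ref{lemma:v1v2} equal, in the chart $z$, $c\,v(\gamma)$ and $c\,w(\gamma)$ for one and the same $c\neq0$. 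Since $v$ and $w$ do not vanish near $\gamma_0$, these derivatives are bounded away from $0$ for $k$ large, uniformly on a neighborhood of $\gamma_0$, while the second $t$-derivatives of $F_k$ and $G_k$ stay uniformly bounded.

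With these estimates I would invoke a quantitative form of the holomorphic implicit function theorem to obtain a neighborhood $\Gamma$ of $\gamma_0$, an integer $k_0$, and for $k\geq k_0$ holomorphic functions $s_k,t_k:\Gamma\to\C$ with $F_k(\gamma,s_k(\gamma))=0$, $G_k(\gamma,t_k(\gamma))=0$, converging uniformly to $0$, and satisfying
\[s_k(\gamma)=-\frac{F_k(\gamma,0)}{\partial_tF_k(\gamma,0)}+O(a^{-4k}),\qquad t_k(\gamma)=-\frac{G_k(\gamma,0)}{\partial_tG_k(\gamma,0)}+O(a^{-4k})\]
uniformly on $\Gamma$. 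Combining this with the two estimates above yields $a^{2k}s_k(\gamma)\to-\lambda(\gamma)\sigma(\gamma)^2/\bigl(c\,v(\gamma)\bigr)$ and $a^{2k}t_k(\gamma)\to-\mu(\gamma)\tau(\gamma)^2/\bigl(c\,w(\gamma)\bigr)$ uniformly, the second limit being bounded away from $0$ near $\gamma_0$; hence $t_k$ does not vanish on $\Gamma$ for $k$ large and
\[\frac{s_k(\gamma)}{t_k(\gamma)}=\frac{a^{2k}s_k(\gamma)}{a^{2k}t_k(\gamma)}\longrightarrow\frac{\lambda(\gamma)/v(\gamma)}{\mu(\gamma)/w(\gamma)}\cdot\frac{\sigma(\gamma)^2}{\tau(\gamma)^2}=-\frac{\sigma(\gamma)^2}{\tau(\gamma)^2}\]
uniformly on $\Gamma$, the last equality being the relation $\lambda/v=-\mu/w$ of Lemma \ref{lemma:c1c2}.

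The main obstacle is to arrange all of this uniformly in $k$ and $\gamma$ simultaneously, i.e. to choose $\Gamma$ and $k_0$ independently of one another. This hinges on two things: the uniform convergence $\x_k(f_{\gamma,t})\to\x_\infty(f_{\gamma,t})$ and $\y_k(f_{\gamma,t})\to\y_\infty(f_{\gamma,t})$ on a fixed neighborhood of $(\gamma_0,0)$, which comes from the joint continuity of the holomorphic motion $\phi$ on a compact subneighborhood of $\G$ together with $x_k(\gamma_0)\to v(\gamma_0)$ and $y_k(\gamma_0)\to w(\gamma_0)$; and a correspondingly uniform (Rouch\'e-type) implicit function theorem, valid once $\|F_k(\cdot,0)\|_\infty,\|G_k(\cdot,0)\|_\infty$ are small enough relative to the uniform lower bound on $|\partial_tF_k(\cdot,0)|,|\partial_tG_k(\cdot,0)|$ and the uniform upper bound on the second $t$-derivatives. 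Once this is in place, the remaining manipulations are routine.
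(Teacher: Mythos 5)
Your proposal is correct and follows essentially the same route as the paper: the paper simply packages your two key estimates by rescaling $t=u/a^{2k}$, showing that $\X_k(\gamma,u)=a^{2k}\bigl(\x_k(f_{\gamma,u/a^{2k}})-\v(f_{\gamma,u/a^{2k}})\bigr)$ converges locally uniformly to the affine function $\lambda\sigma^2+u(\dot\x_\infty-\dot\v)$ and extracting the zero sets by Hurwitz-type convergence, which is the same content as your quantitative implicit function theorem. The final computation of $\lim s_k/t_k$ via Lemmas \ref{lemma:c1c2} and \ref{lemma:v1v2} is identical to the paper's.
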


\begin{proof}
First, note that as $k\to +\infty$, the holomorphic functions $x_k:\H\to \P$ and $v:\H\to \P$ satisfy
\[x_k-v \sim \lambda \cdot \left(\sigma_k-\frac12\right)^2 = \frac{\lambda \sigma^2}{a^{2k}}.\]
Second, let us consider the sequence of functions 
\[\X_k\eqdef (\gamma,u)\mapsto a^{2k} \cdot  \bigl(\x_k(f_{\gamma,u/a^{2k}}) - \v(f_{\gamma,u/a^{2k}}) \bigr).\]
Note that as $k\to +\infty$,  
\begin{align*}
\X_k(\gamma,u) & = a^{2k}\cdot \left(x_k (\gamma)+  \frac{u}{a^{2k}} \cdot \dot \x_\infty(\gamma) - v(\gamma) -  \frac{u}{a^{2k}} \cdot \dot \v(\gamma) + o(1/a^{2k})\right)\\
 &= \lambda (\gamma)\sigma^2(\gamma) + u\cdot \bigl(\dot \x_\infty(\gamma) -\dot \v(\gamma)\bigr)+ o(1).
 \end{align*}
 So, as $k\to +\infty$, the sequence $(\X_k)$  converges uniformly on every compact subset of $\H\times \C$ to the function
\[\X_\infty:(\gamma,u)\mapsto \lambda \sigma^2 + u\cdot ( \dot \x_\infty-\dot \v).\]
According to Lemma \ref{lemma:v1v2}, $\dot \x_\infty-\dot \v$ does not vanish on $\H$. Thus, the 
function $\X_\infty$ vanishes along the graph of a holomorphic function $u_\infty:\H\to \C$ which satisfies
\[u_\infty \cdot ( \dot \x_\infty-\dot \v) = -\lambda \sigma^2.\]
Since $(\X_k)$ converges locally uniformly to $\X_\infty$, given any neighborhood $\Gamma$ of $\gamma_0$ compactly contained in $\H$, there are, for $k$ large enough, holomorphic functions $u_k:\Gamma\to \C$ which satisfy 
\[(\forall \gamma\in \Gamma)\quad \X_k\bigl(\gamma,u_k(\gamma)\bigr)=0,\]
the sequence $u_k$ converging uniformly on $\Gamma$ to $u_\infty$. 
Now, set 
\[s_k\eqdef \frac{u_k}{a^{2k}}:\Gamma\to \C.\]
Then the sequence $(s_k)$ obviously converges uniformly to $0$ on $\Gamma$ and 
\[\x_k(f_{\gamma,s_k(\gamma)})= \v(f_{\gamma,s_k(\gamma)})\and 
a^{2k} \cdot s_k\cdot  ( \dot \x_\infty-\dot \v) \underset{k\to +\infty}\longrightarrow  -\lambda \sigma^2.\]

Similarly,  there are, for $k$ large enough, holomorphic functions $t_k:\Gamma\to \C$ which converge uniformly on $\Gamma$ to $0$ and satisfy 
\[\y_k(f_{\gamma,t_k(\gamma)})= \w(f_{\gamma,t_k(\gamma)})\and 
a^{2k} \cdot t_k\cdot  ( \dot \y_\infty-\dot \w) \underset{k\to +\infty}\longrightarrow  -\mu \tau^2.\]

Those uniform convergences on $\Gamma$ may be reformulated as
\[a^{2k}\cdot \frac{s_k}{\sigma^2}\underset{k\to +\infty}\longrightarrow-\frac{\dot \x_\infty-\dot \v}{\lambda} 
\and 
 a^{2k}\cdot \frac{t_k}{\tau^2}\underset{k\to +\infty}\longrightarrow - \frac{ \dot \y_\infty-\dot \w}{\mu}.\]
According to Lemma \ref{lemma:c1c2} and Lemma \ref{lemma:v1v2}, 
\[ \frac{\dot \x_\infty-\dot \v}{\lambda} = - \frac{ \dot \y_\infty-\dot \w}{\mu}\]
are non-vanishing functions. 
This shows that the sequence $(s_k/t_k)$ converges uniformly on $\Gamma$ to $-\sigma^2/\tau^2$.
\end{proof}

The previous construction yields, for $k$ large enough, functions $s_k$ and $t_k$ which are defined and holomorphic near $\gamma_0$. 
As $k\to +\infty$, the sequence of functions $(1-s_k/t_k)$ converges to $1+\sigma^2/\tau^2$ which has a simple zero at $\gamma_0$. Therefore, for $k$ large enough, the function $s_k/t_k$ takes the value $1$ at a point $\gamma_k$ close to $\gamma_0$ : 
\[s_k(\gamma_k) = t_k(\gamma_k)\with \gamma_k\underset{k\to +\infty}\longrightarrow \gamma_0.\] 
Let $g_k\in {\rm Rat}_D$ be the rational map 
\[g_k\eqdef (1+r_k)\cdot f_{\gamma_k}\with r_k\eqdef s_k(\gamma_k) = t_k(\gamma_k).\]
Note that $r_k$ tends to $0$ and that $g_k$ converges to $f_{\gamma_0}$ as $k$ tends to $+\infty$. The critical values of $g_k$ are contained in the set $\bigl\{0,\infty,(1+r_k)v,(1+r_k)w\bigr\}$ (if $a=2$, then $0$ is not a critical value). 
By construction, $0$ and $\infty$ are (pre)periodic to repelling cycles. In addition,  $(1+r_k)v$ coincides with $x_k(r_k)$ which is (pre)periodic to a repelling cycle and $(1+r_k)w$ coincides with $y_k(r_k)$ which is also (pre)periodic to a repelling cycle. Thus, $g_k$ is strictly postcritically finite but is not a Latt\`es map. 

We have just proved that $f_{\gamma_0}$ can be approximated by rational maps $g_k$ which are strictly postcritically finite but which are not Latt\`es maps. Since the set of allowable $\gamma_0$ is dense in the upper-half plane, this shows that any flexible Latt\`es map can be approximated by rational maps which are strictly postcritically finite but which are not Latt\`es maps.

\end{document}